\documentclass[11pt,reqno]{amsart}
\usepackage{amsbsy,amsfonts,amsmath,amssymb,amscd,amsthm,mathrsfs}
\usepackage{graphicx,epsfig,color}
\usepackage[a4paper,text={6.1in,8in},centering,includefoot,foot=1in]{geometry}

\newtheorem{theorem}{Theorem}[section]
\newtheorem*{theorem A}{Theorem A}
\newtheorem*{theorem B}{Theorem B}
\newtheorem*{theorem C}{Theorem C}

\newtheorem{lemma}[theorem]{Lemma}

\newtheorem{definition}[theorem]{Definition}
\newtheorem{corollary}[theorem]{Corollary}

\begin{document}
\title[Some stochastic properties of topological action semigroups]{Some stochastic Properties of topological action semigroups}
\author[Ghane]{F. H. Ghane}
\address{\centerline{Department of Mathematics, Ferdowsi University of Mashhad, }
\centerline{Mashhad, Iran.}  }
\email{ghane@math.um.ac.ir}
\author[Sarizadeh]{A. Sarizadeh}
\address{\centerline{Department of Mathematics, Ilam University, }
\centerline{Ilam, Iran.}  }
\email{ali.sarizadeh@gmail.com}
\begin{abstract}
For an iterated function system, we investigate combinatorial issues relating to the use of random
orbit and also we study stochastic properties of random orbits in a topological semigroup action.
In this area, the following question arises:
\begin{quote}
\emph{Under which conditions, the probability of branches satisfying a property
$\mathcal{Q}$ belongs to $\{0,1\}$, when $\mathcal{Q}$ is a property on fiber-wise orbits?}
\end{quote}
In this work, we will address that how the chaos game orbits satisfy the property $\mathcal{Q}$ almost surely.

As a consequence, without any regularity, we will provide some conditions for the action of semigroups on the sphere which ensure that they are proximal in a strong way. This means that
the limit infimum of distance between random images of each two points tends to zero
almost surely.
\end{abstract}

\maketitle
\thispagestyle{empty}

\section{Introduction}
An iterated function system can also be thought of as a finite collection of maps which can
be applied successively in any order and it
is a popular way
to generate and explore a variety of fractals.
To be more precise, we assume that $(X,d)$ is a Hausdorff topological space and
$\mathcal{F}=\{f_{1},\dots,f_{k}\}$ is a finite family of continuous maps defined on $X$. We denote by
$<\mathcal{F}>^+$ the semigroup generated by these maps.
Then the action of the semigroup $<\mathcal{F}>^+$ on $X$ is called the \emph{iterated function system}
associated to $\mathcal{F}$ and we denote it by $\textnormal{IFS}(X;\mathcal{F})$ or $\textnormal{IFS}(\mathcal{F})$. Throughout this paper we assume that $X$ is a compact metric space.

Let us consider the symbol spaces $\Sigma^+_k = \{1,\dots, k \}^\mathbb{N}$ and $\Sigma_k = \{1,\dots, k \}^\mathbb{Z}$.
suppose that the maps $f_i$ are randomly picked with probability $p_i > 0$ and $\mathbb{P}^+$ and $\mathbb{P}$ are the product measures on
$\Sigma^+_k $ and $\Sigma_k$, respectively, for these given probabilities
on $\{1,\dots,k \}$. Then this yields to a \emph{random
iterated function system} which is denoted by
$$
    \textnormal{RIFS}(\mathcal{F},\mathcal{P})=(X;f_1,\dots,f_k;p_1,\dots,p_k),
$$
where $\mathcal{P}=\{p_1,\dots,p_k\}$ with $\sum_{i=1}^k p_i=1$.\\
For any sequence
$\omega=(\omega_1\omega_2\dots\omega_n\dots)\in\Sigma^+_k$, we take
$$f^n_{\omega}(x):= f_{\omega_n}\circ f_\omega^{n-1}(x), \textnormal{for} \ \textnormal{all} \ n\in \mathbb{N}, \ \textnormal{and} \ f^{0}_{\omega}:=Id.$$
  Write
$f^{n}_{\omega}(x)=f^{n}_{\omega_{1}\dots\omega_{n}}(x).$
A \emph{fibrewise orbit} or \emph{chaos game orbit} corresponding to a one-way word
$\omega=(\omega_1\omega_2\dots\omega_n\ldots)\in\Sigma^+_k$ at point
$x$ is defined  by
$\mathcal{O}^+(x,\omega)=\{f^n_\omega(x)\}_{n=1}^\infty$.
The chaos game maybe used in data analysis \cite{j} and computer graphics \cite{n}.

Furthermore, we say that $\textnormal{IFS}(X;\mathcal{F})$ is
\emph{minimal} if any orbit has a branch which is dense in $X$, where the \emph{orbit} at a point $x$ is defined by
$$
\mathcal{O}^+(x)=\{h(x):\ h\in \textnormal{IFS}(X;\mathcal{F})\}=\bigcup_{\omega\in\Sigma^+_k}\mathcal{O}^+(x,\omega).
$$

Recently in \cite{bv} and \cite{br}, the authors studied the limit sets of IFSs and provided some conditions which ensure the minimality.

In fact in \cite{bv}, the authors obtained almost sure density
of random orbits in very general situations whenever the IFS is minimal.

 In the present paper, we essentially focus
on a generalization  of the same subject in whereas the density of fiber-wise orbits is a special case
(for more details see Section \ref{aaaa}).

To organize  main results, we need to introduce some notations and several definitions. Also we recall
elementary observations concerning to topological random iterated function systems.

Hereafter, let $\mathcal{E}, \tilde{\mathcal{E}}$ be two relations on $X$.
Denote by $a\mathcal{E}b$ (resp. $a\tilde{\mathcal{E}}b$) for two elements $a$ and $b$ of $X$ with  $(a,b)\in \mathcal{E}$ (resp. $(a,b)\in\tilde{\mathcal{E}}$). With respect to the relation $\mathcal{E}$, we define a general form of a chain.
Given $\omega\in\Sigma^+_k$, a sequence $\{x_i\}_{i=0}^n$
with $x_0=a,\ x_n=b$ is called a chain in direction of $\omega$
with respect to the relation $\mathcal{E}$ from $a$ to $b$ if
$$
       f_{\sigma^{i}(\omega)}(x_i )\mathcal{E}x_{i+1};\ \ \forall\ i=0,1,\dots,n-1,
$$
where $\sigma:\Sigma^+_k\to \Sigma^+_k$ is the Bernoulli shift map.
In this case, we use the notation $a\mathcal{E}^{n}_\omega b$ (or $a\mathcal{E}^{n}_{\omega_1\dots\omega_n}b$).
\begin{definition}
Let $A$ and $\mathcal{Q}$ be two subsets of $X$ and let  $\mathcal{E}, \tilde{\mathcal{E}}$ be two relations on $X$.
The set  $A$ has chain connection to $\mathcal{Q}$ with respect to the relations
$(\mathcal{E},\tilde{\mathcal{E}})$ if there exist points
$a\in A,~q\in \mathcal{Q}$, $\omega\in \Sigma^+_k$ and  a sequence of points $a=x_0,x_1,\dots, x_{n+1}=q$ so that
 $\{x_i\}_{i=0}^n$ is a chain in direction of $\omega$
with respect to the relation $\mathcal{E}$ from $x_0=a$ to $x_n$ and
 $\{x_{n+j}\}_{j=0}^1$ is a chain in direction of $\sigma^n(\omega)$
with respect to the relation $\tilde{\mathcal{E}}$ from $x_n$ to $x_{n+1}=b$.
 \end{definition}
We denote this composition of chains by  $a\mathcal{E}^{n}_\omega\tilde{\mathcal{E}}_{\sigma^n(\omega)}q$ (or $a\mathcal{E}^{n}_{\omega_1\dots\omega_n}\tilde{\mathcal{E}}_{\omega_{n+1}}q$).
\begin{definition}
Let  $X$ be a locally connected compact metric space and  $A$ and $\mathcal{Q}$ be two subsets of $X$. Also let $a\mathcal{E}^{n}_\omega\tilde{\mathcal{E}}_{\sigma^n(\omega)}q$
for some points $a\in A,~q\in \mathcal{Q}$.
If there exists an open set $U$ of the point $a$ so that for all points $u\in U$,
$u\mathcal{E}^{n}_\omega\tilde{\mathcal{E}}_{\sigma^n(\omega)}q$ then we say that $a$ has a stable chain connection to $\mathcal{Q}$ with respect to the relations $(\mathcal{E},\tilde{\mathcal{E}})$.
\end{definition}
\begin{definition}
A point $x\in X$ has a syndetic chain connection to $\mathcal{Q}$, for $\textnormal{IFS}(X;f_1,\ldots,f_k)$, if  the set
$$
\{i\in\mathbb{N}:\ x\mathcal{E}^{i}_\omega\tilde{\mathcal{E}}q,\ \textnormal{for} \ \textnormal{some} \ q\in \mathcal{Q} \ \textnormal{and} \ \omega\in\Sigma^+_k
 \}
$$
is syndetic in $\mathbb{N}$.
\end{definition}
Now we state the main results of this paper.
\begin{theorem A}\label{thmA} Let $ \mathcal{F}=\{f_1,\dots,f_k\}\subset\mathrm{Hom}(X)$
be a finite family of homeomorphisms on a locally connected  compact metric space $X$ and $\mathcal{Q}$ be a subset of $X$.
If  every point $x$ has a  stable chain connection to $\mathcal{Q}$ with respect to relations $(\mathcal{E},\tilde{\mathcal{E}})$ in some direction then for every $x\in X$  there exists
$\Omega(x)\subset\Sigma^+_k$ with $\mathbb{P}^+(\Omega(x)) = 1$ such that $x$ has a chain connection to $\mathcal{Q}$ in direction of $\omega$, for every $\omega\in\Omega(x)$.
\end{theorem A}
An extension of the above result can be obtained whenever the shift map acts on two sided time $\mathbb{Z}$.
To be more precise, for any sequence $\omega=(\dots\omega_{-1}\omega_0\omega_1\omega_2\dots)\in\Sigma_k$ and for $n\in\mathbb{N}$, we take
$$
f^n_{\omega}(x):= f_{\omega_{n-1}}\circ\dots\circ f_{\omega_{0}}(x)\ \ \text{and}\ \ f^{-n}_\omega(x):=(f^{-1})^n_{\omega_{-1}\omega_{-2}\dots}(x)= f^{-1}_{\omega_{-n}}\circ\dots\circ f^{-1}_{\omega_{-1}}(x) ,
$$
where $f^{0}_{\omega}:=Id.$
In this case, a \emph{fibre-wise orbit} corresponding to a two-way infinite word
$\omega\in\Sigma_k$ at point
$x$ is defined  by
$\mathcal{O}(x,\omega)=\{f^n_\omega(x)\}_{n=-\infty}^\infty$.
Denote shortly notation $a\mathcal{E}^{-n}_{\beta_{-n},\dots,\beta_{-1}}\tilde{\mathcal{E}}^{-1}b$ for
the sequence $\{x_i\}_{i=0}^n$ with $x_0=a,\ x_n=b$  with the following properties:
\begin{equation}\label{eq2}
 f^{-1}_{\beta_{-i}}(x_{i-1} )\mathcal{E}x_{i}, \ \textnormal{for} \ \textnormal{all} \ i\in\{1,\dots,n\}; \ \textnormal{and} \ f^{-1}_{\beta_{-n-1}}(x_n)\tilde{\mathcal{E}}q.
\end{equation}
\begin{theorem B}\label{thmB}
Let $\mathcal{F}=\{f_1,\dots,f_k\}\subset\mathrm{Hom}(X)$ be a finite family of homeomorphisms on $X$.
Suppose that there are two finite words
$\alpha_1\dots\alpha_s$ and $\beta_{-t}\dots\beta_{-1}$ so that for every point $x\in X$ the following
property holds:
$$
x\mathcal{E}^i_{\alpha_1\dots\alpha_s}\tilde{\mathcal{E}}q \ \mathrm{~or~}\ x\mathcal{E}^{-j}_{\beta_{-t}\dots\beta_{-1}}\tilde{\mathcal{E}}^{-1}q^\prime;\ \mathrm{for~some}\ 1\leq i \leq s,\ 1\leq j \leq t\ \mathrm{and~for~some}\ q,q^\prime\in \mathcal{Q}.
 $$
Then every $x\in X$ has the property $\mathcal{Q}$ in direction of $(\omega_0\omega_1\omega_2\dots)$ or
$(\omega_{-1}\omega_{-2}\dots)$, for every $\omega=(\dots\omega_{-1}\omega_0\omega_1\dots)\in \Sigma_k$ which has a dense orbit under the shift map $\sigma$.
\end{theorem B}
\begin{definition}
We say that $x$ and $y$ are proximal if there exists $\omega \in \Sigma_k^+$ such that
$$\liminf_{n\to\infty} d(f^n_\omega(x),f^n_\omega(y))=0.$$ Then $(x,y) \in X\times X$ is called a proximal pair.
The iterated function system $\textnormal{IFS}(X;f_1,\ldots,f_k)$ is called proximal if each pair $(x,y) \in X\times X$ is proximal. Furthermore, the $\textnormal{IFS}(X;f_1,\ldots,f_k)$ is proximal in a strong way if for every  $x,y\in X$, there exists $\Omega_0=\Omega_0(x,y) \subset
\Sigma_k^+$ with $\mathbb{P}^+(\Omega_0)=1$  such that
\begin{equation}
\label{eq:1} \liminf_{i\to\infty}d(f^i_\omega(x),f^i_\omega(y))=0,
\quad \text{for all} \ \omega\in \Omega_0.
\end{equation}
\end{definition}
The following theorem is a consequence of Theorem A.
\begin{theorem C}
\label{thmC} Let $\mathcal{F} \subset \mathrm{Hom}(S^n)$ be a
finite family of homeomorphisms of the $n$-sphere $S^n$ and let $\textnormal{IFS}(S^n;\mathcal{F})$ be the iterated function system generated by $\mathcal{F}$. Suppose
that the following assumptions hold:
\begin{enumerate}
\item \label{it:2220} $\textnormal{IFS}(S^n;\mathcal{F})$ is backward minimal, and
\item  \label{it:1110} $\textnormal{IFS}(S^n;\mathcal{F})$ contains a homeomorphism $\phi$ with exactly two fixed
points, one attracting $p$ and one repelling $q$.
\end{enumerate}
Then $\textnormal{IFS}(S^n;\mathcal{F})$ is proximal in a strong way.
\end{theorem C}
\textbf{This work is organized as follows:}
In Section 2, first of all,  we will give some applications of Theorem A and then we will prove Theorem C in the last of this section. The proofs of Theorem A and Theorem B will take Section 3.
\section{Some applications of the main result and proof of Theorem C}\label{aaaa}
$\bullet$\textbf{ \textit{Density of almost all fiber-wise orbits}}.
Suppose that $ \mathcal{F}=\{f_1,\dots,f_k\}\subset\mathrm{Hom}(X)$ is a finite family of
homeomorphisms on a compact connected  metric space $X$ and  $\textnormal{IFS}(X;\mathcal{F})$ is the iterated function system generated by $\mathcal{F}$ so that it acts minimally on $X$.
We say that $\textnormal{IFS}(X;\mathcal{F})$ satisfies the \emph{probabilistic chaos game property} \cite{bg} if for each $x \in X$ there is $\Omega(X) \subset \Sigma_k^+$ with $\mathbb{P}^+(\Omega(X))=1$ so that for each
 $\omega=(\omega_1\omega_2\dots\omega_n\ldots)\in\Omega(X)$
the chaos game orbit $\mathcal{O}^+(x,\omega)$ corresponding to the branch
$\omega$ is dense in $X$.

Now, we focus our study to the probabilistic chaos game property of minimal IFSs which obtains by Theorem A.

Indeed, suppose $\{B_i\}$  is a countable basis of $X$. For every $i\in \mathbb{N}$, one can define  two relations $\mathcal{E}(i)=\mathcal{E},\tilde{\mathcal{E}}(i)$ as follows:
\begin{enumerate}
\item\label{it:1010} $x\mathcal{E}y\ \ \Longleftrightarrow\ \ y=f_i(x)$ for some $1\leq i\leq k$;
\item\label{it:2020} $x\tilde{\mathcal{E}}(i) y\ \ \Longleftrightarrow\ \ x,y\in B_i$.
\end{enumerate}
Take $\mathcal{Q}_i:=B_i$, for $i\in \mathbb{N}$. Since $\textnormal{IFS}(X;\mathcal{F})$ is minimal,
every point $x$ has a chain connection to $\mathcal{Q}_i$, for $i\in \mathbb{N}$, with respect to the relation $\mathcal{E}$
in direction of some $\omega$.
Moreover, the continuity of generators and the openness of  $\mathcal{Q}_i$, for every $i\in \mathbb{N}$, imply that this chain connection
is stable at every point $x$.

Thus, Theorem A implies that for every $x\in X$ there exists
$\Omega_i(x)\subset\Sigma^+_k$
 with $\mathbb{P}^+(\Omega_i(x)) = 1$ such that $\mathcal{O}^+(x,\omega)\bigcap B_i\neq\emptyset$, for all $\omega\in\Omega_i(x)$ and for every $i\in \mathbb{N}$.
Take $\Omega(x):=\bigcap_i\Omega_i(x)$.
\begin{corollary}
Suppose that $ \mathcal{F}=\{f_1,\dots,f_k\}\subset\mathrm{Hom}(X)$ is a finite family of
homeomorphisms on a compact connected  metric space $X$ for which the associated iterated function system $\textnormal{IFS}(X;\mathcal{F})$ is minimal.
Then $\textnormal{IFS}(X;\mathcal{F})$ satisfies the probabilistic chaos game property; that is for every $x$, there exists a subset $\Omega(x)$ of $\Sigma^+_k$ with $\mathbb{P}^+(\Omega(x)) = 1$
so that $\overline{\mathcal{O}^+(x,\omega)}=X$, for all $\omega\in\Omega(x)$.
\end{corollary}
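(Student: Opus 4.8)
The plan is to assemble the ingredients already prepared in the paragraph preceding the statement, since almost all of the analytic work is carried out by Theorem A. What remains is a short measure-theoretic bookkeeping step together with the topological observation that meeting every member of a countable basis forces density.

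First I would record why the hypotheses of Theorem A are satisfied for each index $i$. Minimality of $\textnormal{IFS}(X;\mathcal{F})$ guarantees that through each point $x$ there passes a branch $\omega$ whose orbit is dense, and in particular enters the open set $B_i$; this is precisely a chain connection of $x$ to $\mathcal{Q}_i:=B_i$ with respect to $(\mathcal{E},\tilde{\mathcal{E}}(i))$ in the direction of $\omega$. Because the generators $f_1,\dots,f_k$ are continuous and each $B_i$ is open, the final link ``$x_n\in B_i$'' persists under small perturbations of the initial point, so this chain connection is \emph{stable} at $x$. Theorem A then yields, for every $x$ and every $i\in\mathbb{N}$, a set $\Omega_i(x)\subset\Sigma^+_k$ with $\mathbb{P}^+(\Omega_i(x))=1$ such that $\mathcal{O}^+(x,\omega)\cap B_i\neq\emptyset$ for all $\omega\in\Omega_i(x)$.

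Next I would pass to the intersection $\Omega(x):=\bigcap_{i}\Omega_i(x)$. As this is a countable intersection of full-measure sets, its complement is the countable union $\bigcup_i(\Sigma^+_k\setminus\Omega_i(x))$, and countable subadditivity gives $\mathbb{P}^+(\Sigma^+_k\setminus\Omega(x))\leq\sum_i\mathbb{P}^+(\Sigma^+_k\setminus\Omega_i(x))=0$, whence $\mathbb{P}^+(\Omega(x))=1$. Then, fixing $\omega\in\Omega(x)$ and an arbitrary nonempty open set $U\subset X$, the basis property supplies an index $i$ with $B_i\subset U$, and therefore $\mathcal{O}^+(x,\omega)\cap U\supseteq\mathcal{O}^+(x,\omega)\cap B_i\neq\emptyset$. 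Since $U$ was arbitrary, $\mathcal{O}^+(x,\omega)$ meets every nonempty open subset of $X$, that is $\overline{\mathcal{O}^+(x,\omega)}=X$, which is the asserted probabilistic chaos game property.

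The only point where care is needed is the uniform-in-$i$ verification of the stable-chain-connection hypothesis of Theorem A; but this reduces to continuity of the finitely many generators and openness of the basis elements, so no genuine obstacle arises. In effect the entire substance of the corollary is absorbed into Theorem A, and the present argument is merely the translation of ``stable chain connection to each $B_i$'' into ``almost sure density'' via a countable intersection.
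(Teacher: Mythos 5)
Your proposal is correct and follows essentially the same route as the paper: the paper likewise defines the relations via the generators and a countable basis $\{B_i\}$, takes $\mathcal{Q}_i:=B_i$, derives the stable chain connection from minimality together with continuity of the generators and openness of the $B_i$, applies Theorem A to obtain the full-measure sets $\Omega_i(x)$, and intersects them to get $\Omega(x)$. Your write-up merely makes explicit the countable-subadditivity step and the ``meets every basis element implies dense'' conclusion, which the paper leaves implicit.
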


$\bullet$\textbf{ \textit{Density of almost all fiber-wise Chain iterates}}.
Here, we provide another application of the main result of this paper for chain transitive iterated function systems which is useful in shadowing lemmas.

Let  $\textnormal{IFS}(X;\mathcal{F})$ be an iterated function system generated  by a finite set $\mathcal{F}=\{f_1,\dots,f_k\}$ of homeomorphisms defined on a compact connected metric space $X$.

Fix $\delta> 0$. A point $x$ is
 $\delta$-\emph{chain iterate to} $y$ \emph{with respect to} $\omega$ (and we write $x\dashv_\delta^\omega y$ or $x\dashv_\delta y$) if there is a sequence of points
 $x=x_0,x_1,\dots, x_n=y$ with $n\geq1$ such that  $d(x_{i+1}, f^i_\omega(x_i ))<\delta$,
 for each
$i \in\{0, 1, 2,\dots, n -1\}$. Also, $x$ and $y$  are $\delta$-chain equivalent ($x\vdash\dashv_\delta y$) if and only if $x\dashv_\delta y$ and $y\dashv_\delta x$.
Moreover, $x$ is a \emph{chain iterate to} $y$ (and we write $x\dashv y$) if for every $\delta>0$, $x\dashv_\delta y$. We say that $x$ and $y$  are chain equivalent ($x\vdash\dashv y$) if and only if $x\dashv y$ and $y\dashv x$.
A point $x \in X$ is \emph{chain-recurrent} for the $\textnormal{IFS}(X;\mathcal{F})$
if $x$ is chain equivalent to itself. The
set of all chain recurrent points for $\mathcal{F}$
 is denoted by $R= R\mathcal{F}$.

We say that an invariant compact set $K$ is \emph{chain transitive} if for each two points $p, q\in K$ and any
$\delta> 0$, there is a $\delta$-chain iterate contained in $K$ that joints $p$ to $q$.

Now, suppose that $X$ is chain transitive for $\textnormal{IFS}(X;\mathcal{F})$.
Thus, the relation $\dashv_\delta$ is an equivalence relation, for every $\delta>0$.
For all $\delta>0$, define
\begin{enumerate}
\item\label{it:1212} $x\mathcal{E}(\delta) y\ \ \Longleftrightarrow\ \ d(f_i(x),y)<\delta$, for some $1\leq i\leq k$;
\item\label{it:12120} $\tilde{\mathcal{E}}(\delta)=\mathcal{E}(\delta).$
\end{enumerate}

Take an arbitrary point $z$ in $X$.
Since $\textnormal{IFS}(X;\mathcal{F})$ is chain transitive, so for each $x\in X$, $\{x\}$
has a chain connection to   $\{z\}$, with respect to the relation $\mathcal{E}(\delta)$  in direction of some branch $\omega$.
Thus, there is a chain $\{x_i\}_{i=0}^{t}$ in direction of $\omega$
with respect to the relation $\mathcal{E}(\delta)$ which we will denote by
$x\mathcal{E}(\delta)^{t}_\omega z$, where $x_0=x$ and $x_t=z$,  and
it is equivalent to $x\mathcal{E}(\delta)^{t-1}_\omega\tilde{\mathcal{E}}(\delta) z$.
Moreover, the continuity of generators
implies the existence of a neighborhood $B(x,r)$ so that $f_{\omega_1}(B(x,r))\subset B(x_1,\delta)$.
Hence, $(x_0=u,x_1,\dots,x_t)$ is a chain with respect to the relation
$\mathcal{E}(\delta)$  in direction of $\omega$, for every $u\in B(x,r)$.
This implies that $x$ has a stable chain connection to $\{z\}$.

Now we apply Theorem A to conclude that for each $x\in X$ there exists
$\Omega(x,z,\delta)\subset\Sigma^+_k$
with $\mathbb{P}^+(\Omega(x,z,\delta)) = 1$ such that
$x\mathcal{E}(\delta)^{t-1}_\omega\tilde{\mathcal{E}}(\delta) z$
(or $x\dashv_{\delta}^\omega z$), for every
$\omega\in\Omega(x,z,\delta)$ and for some $t\in \mathbb{N}$.\\
Clearly, $x\mathcal{E}(2\delta)^{t-1}_\omega\tilde{\mathcal{E}}(2\delta) q$
(or $x\dashv_{2\delta} q$), for every  $q\in \mathcal{Q}_z=B(z,\delta)$,
$\omega\in\Omega(x,z,\delta)$ and for some $t\in \mathbb{N}$.

On the other hand, by compactness of $X$, there exist $z_1,\dots,z_\ell\in X$ so that
$X=\bigcup_{i=1}^\ell \mathcal{Q}_{z_i}$.
Take $\Omega(x,\delta):=\bigcap_i\Omega(x,z_i,\delta)$. Clearly, $\mathbb{P}^+(\Omega(x,\delta)) = 1$.
So, for every $y\in X$ and  $\omega\in\Omega(x,\delta)$ there exists $t\in \mathbb{N}$ so that $x\mathcal{E}(2\delta)^t_\omega\tilde{\mathcal{E}}(2\delta) y$.

Finally we take $\Omega(x):=\bigcap_{i\in \mathbb{N}}\Omega(x,1/i)$.
Hence, $\mathbb{P}^+(\Omega(x)) = 1$. Therefore, for every $y\in X$
$\omega\in\Omega(x)$ and $\delta>0$, there exist $i,t\in \mathbb{N}$ such that $2/i<\delta$ and $x\mathcal{E}(2/i)^t_\omega\tilde{\mathcal{E}}(2/i) y$.
So we obtain the following corollary.
\begin{corollary}
Suppose that $ \mathcal{F}=\{f_1,\dots,f_k\}\subset\mathrm{Hom}(X)$ is a finite family of
homeomorphisms on a compact connected  metric space $X$ so that $X$ is chain transitive for $\textnormal{IFS}(X;\mathcal{F})$.
Then for every $x$, there exists a subset $\Omega(x)$ of $\Sigma^+_k$ with $\mathbb{P}^+(\Omega(x)) = 1$
such that
  $x\dashv_{2/i}^\omega y$ for every $y\in X$, $\omega\in\Omega(x)$ and $i\in \mathbb{N}$.
\end{corollary}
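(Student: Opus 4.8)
The plan is to deduce the corollary directly from Theorem A by choosing the two relations so that a chain connection becomes literally a $\delta$-chain iterate. For a fixed $\delta>0$ I would work with the relations $\mathcal{E}(\delta)$ and $\tilde{\mathcal{E}}(\delta)=\mathcal{E}(\delta)$ introduced just above, so that the statement $x\mathcal{E}(\delta)^{t-1}_\omega\tilde{\mathcal{E}}(\delta) z$ is exactly $x\dashv_\delta^\omega z$. Fixing an arbitrary target $z\in X$, chain transitivity of $X$ supplies, for every starting point $x$, a finite $\delta$-chain from $x$ to $z$ in the direction of some word $\omega$; this is precisely a chain connection of $\{x\}$ to $\{z\}$ with respect to $\mathcal{E}(\delta)$.

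The next step is to upgrade this to a \emph{stable} chain connection, which is the hypothesis Theorem A actually needs. Here I would use only the continuity of the first generator $f_{\omega_1}$: since the opening link satisfies a strict inequality, continuity produces a radius $r>0$ with $f_{\omega_1}(B(x,r))\subset B(x_1,\delta)$, while the remaining links of the chain are reused verbatim. Thus every $u\in B(x,r)$ still satisfies $u\mathcal{E}(\delta)^{t-1}_\omega\tilde{\mathcal{E}}(\delta) z$, so $x$ has a stable chain connection to $\{z\}$. Theorem A then yields a set $\Omega(x,z,\delta)\subset\Sigma^+_k$ with $\mathbb{P}^+(\Omega(x,z,\delta))=1$ along which $x\dashv_\delta^\omega z$ holds.

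It remains to remove the dependence on the single target $z$ and on the scale $\delta$ while preserving full measure. I would first absorb a ball around $z$ by the triangle-inequality slack: if $x\dashv_\delta^\omega z$ then $x\dashv_{2\delta}^\omega q$ for every $q\in\mathcal{Q}_z=B(z,\delta)$, since relaxing the final endpoint from $z$ to any such $q$ costs at most an extra $\delta$ in tolerance. Compactness of $X$ then produces finitely many centres $z_1,\dots,z_\ell$ with $X=\bigcup_{i=1}^\ell\mathcal{Q}_{z_i}$, so that $\Omega(x,\delta):=\bigcap_{i}\Omega(x,z_i,\delta)$ still carries full $\mathbb{P}^+$-measure and along it $x\dashv_{2\delta}^\omega y$ for every $y\in X$. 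Finally I would take the countable intersection $\Omega(x):=\bigcap_{i\in\mathbb{N}}\Omega(x,1/i)$, which again has measure one, and read off $x\dashv_{2/i}^\omega y$ for all $y\in X$, all $\omega\in\Omega(x)$ and all $i\in\mathbb{N}$.

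I expect the only genuinely delicate points to be the verification of stability and the bookkeeping of the two countable intersections. Stability is clean because the strict inequality defining $\mathcal{E}(\delta)$ leaves room for an open neighbourhood after applying the continuous map $f_{\omega_1}$, and only the first link needs thickening. The intersection bookkeeping is harmless, since a countable intersection of full-measure sets has full measure and the loss from $2\delta$ back to $\delta$ is exactly compensated by running the argument along the sequence $\delta=1/i$. The one structural ingredient to keep explicit is that chain transitivity is precisely what guarantees a $\delta$-chain from every $x$ to the chosen target, making the chain-connection hypothesis of Theorem A available uniformly in the starting point.
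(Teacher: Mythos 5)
Your proposal is correct and follows essentially the same route as the paper: the same relations $\mathcal{E}(\delta)=\tilde{\mathcal{E}}(\delta)$, the same stability argument thickening only the first link of the chain via continuity of $f_{\omega_1}$, the same passage from a fixed target $z$ to the ball $B(z,\delta)$ at cost $\delta\mapsto 2\delta$, and the same compactness-plus-countable-intersection bookkeeping over $\delta=1/i$. No gaps; the write-up is if anything slightly more explicit than the paper about why the triangle-inequality relaxation and the stability step work.
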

Now, by using of Theorem A, we will prove Theorem C.
\begin{proof}[Proof of Theorem C]
In order to apply Theorem A, we consider the open balls $\mathcal{Q}_i=B(p,\frac{1}{i})$,
for every $i\in \mathbb{N}$. Also, for every $i\in \mathbb{N}$, we define  relations $\mathcal{E}(i)=\mathcal{E},\tilde{\mathcal{E}}(i)$ as follows:
\begin{enumerate}
\item\label{it:1010} $(x,y)\mathcal{E}(x^\prime,y^\prime)\ \ \Longleftrightarrow\ \ y=f_i(x)$ and $y^\prime=f_i(x^\prime)$ for some $1\leq i\leq k$;
\item\label{it:2020} $(x,y)\tilde{\mathcal{E}}(i) (x^\prime,y^\prime)
\ \ \Longleftrightarrow\ (x,y)\mathcal{E}(x^\prime,y^\prime)\ \text{and}\ x^\prime,y^\prime\in B_i$.
\end{enumerate}
Since $\textnormal{IFS}(X;\mathcal{F})$ is backward minimal, there is $h\in \langle f_1^{-1},\dots, f_k^{-1}\rangle^+$
so that the orbit piece $q_i=h^i(q),$ for $i=0,1,2$, consisting of three different points.
Take an open ball $W_0$ containing $q$ so that $W_i=h^i(W_0)$, for $i=0,1,2$,  are mutually disjoint.
Thus, for every two arbitrary point $x,y\in X$, one of the following possibilities holds:
\begin{enumerate}
\item\label{it:2222222} $\{x,y\}\bigcap(\bigcup_{i=0}^2 W_i)=\emptyset;$
\item\label{it:3333333} $\{x,y\}\bigcap(\bigcup_{i=0}^2 W_i)\neq\emptyset.$
\end{enumerate}
In both of the cases, there exists $j=j(x,y)\in \{0,1,2\}$ so that $\{x,y\}\subset S^n\setminus W_j$.
This implies that $\{h^{-j}(x),h^{-j}(y)\}\bigcap W_0=\emptyset$.
On the other hand, for large enough $\ell$, $\phi^\ell(S^n\setminus W_0 )\subset \mathcal{Q}_i$ and so,
$\phi^\ell\circ h^{-j}(x),\phi^\ell\circ h^{-j}(y)\in \mathcal{Q}_i$.
Moreover, the continuity of generators
ensures that  $(x,y)$ has a stable chain connection to $\mathcal{Q}_i$.

Thus, Theorem A implies that for every $(x,y)\in X$ there exists
$\Omega(x,y,i)\subset\Sigma^+_k$
with $\mathbb{P}^+(\Omega(x,y,i)) = 1$ such that
$(x,y)\mathcal{E}^t_\omega\tilde{\mathcal{E}}(i) (f^{t+1}_\omega(x),f^{t+1}_\omega(y))$, for every
$\omega\in\Omega(x,y,i)$ and for some $t\in \mathbb{N}$.

Finally we take $\Omega(x,y):=\bigcap_{i\in \mathbb{N}}\Omega(x,y,i)$.
Hence, $\mathbb{P}^+(\Omega(x,y)) = 1$. Therefore, for each $(x,y)\in X\times X$,
 $\omega\in\Omega(x,y)$ and $\delta>0$, there exist $i,t\in \mathbb{N}$ so that $1/i<\delta$ and
$$
              (x,y)\mathcal{E}(1/i)^t_\omega\tilde{\mathcal{E}}(1/i) (f^{t+1}_\omega(x),f^{t+1}_\omega(y)).
$$
So, for every $(x,y)\in X\times X$, there is a subset $\Omega(x,y)$ of $\Sigma^+_k$ for which the following holds:
for every $\delta>0$, there exists $t\in \mathbb{N}$ so that
$$
 d(f^{t}_\omega(x),f^{t}_\omega(y))<\delta;\ \ \forall\ \omega\in\Omega(x,y).
$$
This completes the proof of the theorem.
\end{proof}
\section{Proof of Theorem A}

To establish Theorem A, we need to prove two following lemmas.
\begin{lemma}\label{claim1}
Let $ \mathcal{F}=\{f_1,\dots,f_k\}\subset\mathrm{Hom}(X)$ be a finite family
of homeomorphisms on $X$ and $\mathcal{Q}$ be a subset of $X$. If for $x\in X$
there is $0<\ell=\ell(x)$ so that
$$
p:=\inf_{y\in \mathcal{O}^+(x)}\nu^\ell(\{\omega_1\dots\omega_\ell\in\Sigma_k^\ell:\
y\mathcal{E}^{i}_{\omega_1\dots\omega_\ell}\tilde{\mathcal{E}}q\ \textnormal{for} \ \textnormal{some} \
1\leq i \leq \ell \ \textnormal{and} \ q\in \mathcal{Q}\})\neq 0,
$$
then for every $x\in X$, there exists
$\Omega(x)\subset\Sigma^+_k$ with $\mathbb{P}^+(\Omega(x))= 1$
such that $\{x\}$ has a chain connection to $\mathcal{Q}$ in direction of $\omega$, for every $\omega\in\Omega(x)$.
\end{lemma}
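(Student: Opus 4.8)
The plan is to extract the almost-sure chain connection by a conditional Borel--Cantelli argument, in which the \emph{single} uniform bound $p>0$ forces a success with probability one even though the successive trials are not independent. First I would chop the one-sided word $\omega=(\omega_1\omega_2\dots)$ into consecutive blocks of length $\ell$, writing
\[
B_j(\omega)=(\omega_{(j-1)\ell+1},\dots,\omega_{j\ell})\in\Sigma_k^\ell,\qquad y_j:=f^{(j-1)\ell}_\omega(x),
\]
so that $y_j$ is the point at which the $j$-th block starts acting (and $y_j\in\mathcal{O}^+(x)$ for $j\ge 2$). Abbreviating by $S_\ell(y)\subset\Sigma_k^\ell$ the finite set of length-$\ell$ words $\omega_1\dots\omega_\ell$ for which $y\,\mathcal{E}^i_{\omega_1\dots\omega_\ell}\,\tilde{\mathcal{E}}\,q$ holds for some $1\le i\le\ell$ and some $q\in\mathcal{Q}$, I call the $j$-th block a \emph{success} when $B_j(\omega)\in S_\ell(y_j)$. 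The hypothesis is precisely the assertion that $\nu^\ell(S_\ell(y))\ge p$ for every $y\in\mathcal{O}^+(x)$, i.e.\ that, wherever the trajectory currently sits, the chance that the next block produces a chain connection is bounded below by $p$.

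The heart of the argument is to make this sentence rigorous by conditioning on the past. Let $\mathcal{A}_m$ be the $\sigma$-algebra on $\Sigma^+_k$ generated by the first $m$ coordinates; then $y_j$ is $\mathcal{A}_{(j-1)\ell}$-measurable, while the coordinates constituting $B_j(\omega)$ are independent of $\mathcal{A}_{(j-1)\ell}$ and carry the law $\nu^\ell$. Hence, writing $\mathbb{E}^+$ for expectation under $\mathbb{P}^+$ and setting $F_N=\bigcap_{j=1}^{N}\{\text{block }j\text{ fails}\}\in\mathcal{A}_{N\ell}$, the tower property gives
\[
\mathbb{P}^+(F_N)=\mathbb{E}^+\!\Bigl[\mathbf{1}_{F_{N-1}}\,\mathbb{P}^+\!\bigl(B_N(\omega)\notin S_\ell(y_N)\mid\mathcal{A}_{(N-1)\ell}\bigr)\Bigr]=\mathbb{E}^+\!\Bigl[\mathbf{1}_{F_{N-1}}\bigl(1-\nu^\ell(S_\ell(y_N))\bigr)\Bigr]\le(1-p)\,\mathbb{P}^+(F_{N-1}),
\]
valid for $N\ge 2$, since $F_{N-1}\in\mathcal{A}_{(N-1)\ell}$ and $1-\nu^\ell(S_\ell(y_N))\le 1-p$. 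Iterating down to $F_1$ yields $\mathbb{P}^+(F_N)\le(1-p)^{N-1}\to 0$, and since the events $F_N$ decrease to the event that \emph{every} block fails, that event is $\mathbb{P}^+$-null. I would then set $\Omega(x)$ to be its complement, so that $\mathbb{P}^+(\Omega(x))=1$ and, for each $\omega\in\Omega(x)$, at least one block succeeds.

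It then remains to upgrade a single block success into a chain connection emanating from $x$ itself. Fixing, for $\omega\in\Omega(x)$, the first successful block $j$, I would compose the trajectory $x=f^0_\omega(x),f^1_\omega(x),\dots,f^{(j-1)\ell}_\omega(x)=y_j$ — a chain in direction $\omega$ with respect to $\mathcal{E}$ — with the length-$i$ chain furnished by the success $y_j\,\mathcal{E}^i_{B_j(\omega)}\,\tilde{\mathcal{E}}\,q$; by the composition of chains underlying the notion of chain connection, this is a single chain in direction $\omega$ from $x$ to $\mathcal{Q}$, which is exactly the required connection. The step I expect to be the real obstacle is the probabilistic one: because the starting point $y_j$ of the $j$-th trial is manufactured by the outcomes of the earlier blocks, the success events are genuinely dependent and the classical independent Borel--Cantelli lemma is unavailable. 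What rescues the argument is that the hypothesis bounds the success probability below by the \emph{same} constant $p$ \emph{uniformly over the whole forward orbit} $\mathcal{O}^+(x)$; this uniformity is exactly what lets one pull the factor $1-p$ out of the conditional expectation at every stage and so collapse the telescoping estimate to the geometric bound $(1-p)^{N-1}$. Measurability causes no difficulty, since $\Sigma_k^\ell$ is finite and each success event is a finite union of cylinders.
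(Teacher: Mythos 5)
Your argument is, at its core, the paper's own proof carried out more carefully. The paper introduces the sets $\Omega(x,n)$ (directions achieving a chain connection within $n+\ell$ steps) and simply \emph{asserts} the recursion $\mathbb{P}^+(\Sigma_k^+\setminus\Omega(x,n))\le(1-p)\,\mathbb{P}^+(\Sigma_k^+\setminus\Omega(x,n-\ell))$, from which the geometric bound $(1-p)^{1+[n/\ell]}\to 0$ follows; your block decomposition plus the tower-property computation with respect to the filtration $\mathcal{A}_{j\ell}$ is precisely the justification of that asserted inequality (the uniformity of $p$ over $\mathcal{O}^+(x)$ playing exactly the role you describe). So on the probabilistic side your write-up is the same approach, only more complete than the paper's.

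The one step that deserves scrutiny is the concatenation step you flag at the end. Your claim that the trajectory $x, f^1_\omega(x),\dots,f^{(j-1)\ell}_\omega(x)$ is itself a chain in direction $\omega$ with respect to $\mathcal{E}$ amounts to requiring $f_{\omega_{i+1}}(f^i_\omega(x))\,\mathcal{E}\,f^{i+1}_\omega(x)$, i.e.\ $z\,\mathcal{E}\,z$ at every point $z$ of the fibrewise orbit. For an arbitrary relation $\mathcal{E}$ this is not granted by the lemma's hypotheses, and it is genuinely needed: without some such reflexivity the statement is false. For instance, take $X=\{a,b,c\}$ discrete, $f_1$ the transposition of $a$ and $b$, $f_2=\mathrm{id}$, $\mathcal{E}=\{(b,b)\}$, $\tilde{\mathcal{E}}=\{(b,c)\}$, $\mathcal{Q}=\{c\}$, $x=a$; then the infimum in the hypothesis equals $\min(p_1p_2,p_2^2)>0$ (with $\ell=2$), yet the set of directions along which $a$ has a chain connection to $\mathcal{Q}$ is exactly the cylinder $\{\omega_1=1,\omega_2=2\}$, of measure $p_1p_2<1$, precisely because a success in a later block cannot be pulled back to a chain starting at $x$. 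That said, this is not a defect peculiar to your proposal: the paper's recursion silently restarts from the current trajectory point $f^n_\omega(x)$ and therefore uses the very same concatenation without comment, and in every application in the paper the relation $\mathcal{E}$ is of reflexive type (equality of points, or $d(\cdot,\cdot)<\delta$), so trajectories are indeed chains there. To make your proof airtight you should record this property of $\mathcal{E}$ (reflexivity along trajectories, equivalently ``fibrewise orbits are chains'') as a standing assumption; with it, your argument is complete and coincides with the paper's intended one.
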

\begin{proof}
Take
$$
    \Omega(x):=\{\omega\in\Sigma_k^+:\ \{x\}~\mathrm{has}~\mathrm{chain~connction~to ~} \mathcal{Q} \mathrm{~in~direction} \ \textnormal{of} \ \omega\}.
$$
We show that $\mathbb{P}^+(\Omega(x))=1$. To prove first we define
$$
   \Omega(x,n):=\{\omega\in\Sigma_k^+:\ x\mathcal{E}^{j}_{\omega_1}\tilde{\mathcal{E}}q\ \text{ for some}\
j\leq n+\ell \text{~and~} q\in \mathcal{Q}\}.
$$
Clearly
$\Omega(x)\supset\bigcup_{n\in
N}\Omega(x,n)$. Thus
\begin{align*}
 \nu^+(\Sigma_k^+\setminus\Omega(x,n)) &\leq (1-p)\nu^+(\Sigma_{k}^{+}\setminus \Omega(x,n-\ell))\\
&\leq(1-p)^{1+[\frac{n}{\ell}]}\to 0
\end{align*}
as $n\to \infty$. It follows that $\nu^+(\Omega(x))=1$.
\end{proof}
Consider a sequence of points $a=x_0,x_1,\dots, x_{j}$ which is a chain connection from $x_0=a$ to $x_j$
with respect to the relation $\mathcal{E}$ in direction of $\rho$. When
$(f_{\sigma^j(\rho)}(x_j ),q)\not\in\mathcal{E}$, we write $a\mathcal{E}^{j}_{\rho}\not\tilde{\mathcal{E}}q$.

\begin{lemma}\label{claim2}
Under the assumptions of Theorem A, every point $x$ has a syndetic chain connection  to $\mathcal{Q}$.
\end{lemma}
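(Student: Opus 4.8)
The plan is to convert the pointwise stable chain connection hypothesis into a single uniform bound on connection lengths valid at every point of $X$, and then to propagate this bound along a forward chain emanating from $x$ so that $\mathcal{Q}$ is reached with bounded gaps.

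First I would uniformize. By hypothesis each $z\in X$ has a stable chain connection to $\mathcal{Q}$, so there are an integer $n_z\geq 1$, a word $\omega^{(z)}\in\Sigma^+_k$, a point $q_z\in\mathcal{Q}$ and an open neighborhood $U_z\ni z$ with $u\mathcal{E}^{n_z}_{\omega^{(z)}}\tilde{\mathcal{E}}q_z$ for every $u\in U_z$. The family $\{U_z\}_{z\in X}$ is an open cover of the compact space $X$, so I extract a finite subcover $U_{z_1},\dots,U_{z_m}$ and set $N:=\max_{1\leq j\leq m}n_{z_j}$. Consequently every point $u\in X$ admits a chain connection to $\mathcal{Q}$ of some length $n(u)\in\{n_{z_1},\dots,n_{z_m}\}$ with $1\leq n(u)\leq N$, in one of the finitely many directions $\omega^{(z_1)},\dots,\omega^{(z_m)}$. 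This is the step where stability is essential: openness of the neighborhoods is exactly what allows compactness to turn pointwise, a priori unbounded, connection lengths into the uniform bound $N$.

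Next I would build a forward backbone. Put $b_0:=x$, and given $b_j$ note that its connection $b_j\mathcal{E}^{n(b_j)}_{\omega^{(b_j)}}\tilde{\mathcal{E}}q_{b_j}$ begins with a genuine step $f_{\omega^{(b_j)}_1}(b_j)\mathcal{E}b_{j+1}$; I let $b_{j+1}$ be this next chain point. This produces an infinite chain $b_0,b_1,b_2,\dots$ together with, at each $b_j$, a connecting chain of length $n(b_j)$ reaching $\mathcal{Q}$. For each $j$ I then concatenate the backbone $b_0\to\cdots\to b_j$ (which contributes $j$ admissible $\mathcal{E}$-steps) with the connecting chain at $b_j$ (which contributes $n(b_j)$ steps and the final $\tilde{\mathcal{E}}$-step), using the word $\omega^{[j]}\in\Sigma^+_k$ whose first $j$ symbols are the successive first symbols of $\omega^{(b_0)},\dots,\omega^{(b_{j-1})}$ and whose symbols from position $j+1$ onward reproduce those of $\omega^{(b_j)}$. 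This yields $x\mathcal{E}^{\,j+n(b_j)}_{\omega^{[j]}}\tilde{\mathcal{E}}q_{b_j}$, so $v_j:=j+n(b_j)$ belongs to the set whose syndeticity is sought.

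Finally I would read off syndeticity. Since $1\leq n(b_j)\leq N$ we have $v_j\in[\,j+1,\ j+N\,]$ for every $j\geq 0$; in particular, for each $M\in\mathbb{N}$ the element $v_M$ lies in the window $[M,M+N]$. Hence every interval of length $N+1$ meets $\{v_j:j\geq 0\}$, which is therefore syndetic with gap bound at most $N+1$, and a fortiori so is the full set of connection lengths from $x$. The main obstacle I anticipate is the uniformization step, namely ensuring that stability together with compactness genuinely bounds the connection length uniformly over all of $X$ and not merely along a single orbit, together with the minor point of guaranteeing that each $b_j$ admits a forward $\mathcal{E}$-step; the latter is free here because every connection has length $n(b_j)\geq 1$, so the first step of the connecting chain supplies the backbone step.
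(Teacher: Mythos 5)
Your argument is correct, but it is not the paper's argument: you give a direct, quantitative proof, while the paper proceeds by contradiction. The paper supposes the set $A=\{i:\ x\mathcal{E}^{i}_{\omega}\tilde{\mathcal{E}}q\ \text{for some}\ q\in\mathcal{Q},\ \omega\in\Sigma^+_k\}$ is not syndetic, extracts gaps $[a_i+1,a_i+k_i]$ with $k_i\to\infty$, uses compactness of $X$ to pass to a limit point $y$ of the orbit points $f^{a_i}_{\omega_1\dots\omega_{a_i}}(x)$ sitting at the start of those gaps, and concludes that $y$ admits no chain connection to $\mathcal{Q}$ whatsoever, contradicting the hypothesis of Theorem A. You instead turn stability plus compactness into a uniform bound (finite subcover of the stability neighborhoods, $N=\max_j n_{z_j}$) and then manufacture, by concatenating a backbone with the connecting chain at its endpoint, a connection of length $v_j=j+n(b_j)\in[j+1,j+N]$ for every $j$, so $A$ meets every window of length $N$. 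Your route buys an explicit syndeticity constant $N$, uniform over $x$, and it avoids the paper's most delicate step: the passage from ``no connection of length $a_i+j$ from $x$'' to ``no connection of length $j$ from the limit $y$'' requires an openness property of the relations that the paper never isolates (it is, in effect, a second, hidden use of stability). The paper's route, in turn, only needs the hypothesis at one limit point rather than via a cover of all of $X$, and because it prepends genuine orbit segments instead of first steps of connecting chains, it does not need those chains to have positive length.

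That last point is the one soft spot in your write-up: you assert $n_z\geq 1$ without justification, and your backbone step genuinely needs it. For an abstract relation $\mathcal{E}$, a point whose stable connection has length $0$ (the $\tilde{\mathcal{E}}$-step firing immediately) need not admit any $\mathcal{E}$-step at all, and then $b_{j+1}$ does not exist; indeed, with $\mathcal{E}=\emptyset$ and $\tilde{\mathcal{E}}=X\times X$ the lemma as literally stated fails, so some such assumption is unavoidable. The paper glosses over the same issue differently (its proof silently treats orbit segments as $\mathcal{E}$-chains, which also fails for arbitrary $\mathcal{E}$); in all the applications of Section 2 the relation $\mathcal{E}$ is an orbit-type relation, under which either patch works. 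So: correct, genuinely different, arguably cleaner and more quantitative, but state the positive-length (or orbit-compatibility) assumption explicitly.
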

\begin{proof}
To get a contradiction, suppose that there exists a point $x$ so that it does not admit any syndetic chain connection to  $\mathcal{Q}$. Then, the set
$$
A=\{i:\ x\mathcal{E}^{i}_{\omega}\tilde{\mathcal{E}}q\ \text{for~some}\ q\in\mathcal{Q},~\textrm{~and ~some~} \omega\in \Sigma^+_k\},
$$
is not syndetic in $\mathbb{N}$. This implies the existence of positive integers $a_i$, $\omega_1, \ldots, \omega_{a_i} \in \{1, \ldots, k \}$ and $k_i\in \mathbb{N}$  with $k_i\rightarrow\infty$
such that
$$
 x\mathcal{E}^{a_i+j}_{\omega_1\ldots\omega_{a_i}\rho_1\ldots\rho_j}\not\tilde{\mathcal{E}}q,\ \mathrm{for~every}\ q\in\mathcal{Q}, \ (\rho_1\ldots\rho_j)\in\Sigma^j_k \ \textnormal{and} \ j=1,\ldots,k_i,
$$
where $\Sigma^j_k $ consists of all finite words of the length $j$ and of the alphabets $\{1, \ldots, k \}$.
By passing to a subsequence, let $f^{a_i}_{\omega_1\ldots\omega_{a_i}}(x)\rightarrow y$. Then, $y\in\overline{\mathcal{O}^+(x)}$. Fix $j\in\mathbb{N}$. Obviously, for sufficiently large $i$,
$$
 f^{a_i+j}_{\omega_1\ldots\omega_{a_i}\rho_1\ldots\rho_j}(x)\rightarrow f^{j}_{\rho_1\ldots\rho_j}(y).
$$
Thus $y\mathcal{E}^{j}_{\rho_1\ldots\rho_j}\not\tilde{\mathcal{E}}q,\ \mathrm{for~every}\ q\in\mathcal{Q}$, $j\in\mathbb{N}$ and $(\rho_1\ldots\rho_j)\in\Sigma^j_k$. This implies that  $y$ does not admit any chain connection to $\mathcal{Q}$  which is a contradiction.
\end{proof}
Now, we are ready to prove Theorem A.
\begin{proof}[Proof of Theorem A]
Under the assumptions of Theorem A, Lemma \ref{claim2} ensures that every point $x$ in $X$ has a  syndetic chain
connection  to $\mathcal{Q}$. Therefore, for $x\in X$,  there exists $\ell=\ell(x)$ so that
\begin{align*}
&\inf_{y\in \mathcal{O}^+(x)}\nu^\ell(\{\omega_1\dots\omega_\ell\in\Sigma_k^\ell:\ y\mathcal{E}^i_{\omega_1\dots\omega_\ell}\tilde{\mathcal{E}}q\ \text{for some}\ q\in\mathcal{Q} \ \text{and some}\ 1\leq i \leq \ell \})\\
&\geq (\inf_{1\leq i\leq k}\nu(i))^\ell.
\end{align*}
Now, we apply Lemma \ref{claim1} to complete the proof.
\end{proof}
The following corollary is a stronger result than Lemma \ref{claim1}.
\begin{corollary}\label{proA} Let $\mathcal{F}=\{f_1,\dots,f_k\}\subset\mathrm{Hom}(X)$ be a finite family of homeomorphisms on $X$
and $\mathcal{Q}$ be a subset of $X$. Assume that there is a word $\rho\in \Sigma^+_k$ and $\ell>0$ so that for every point $x\in X$ the following property holds:
$$
 x\mathcal{E}^i_\rho\tilde{\mathcal{E}}q;\ \mathrm{for~some}\ 1\leq i \leq \ell\ \mathrm{and}\ q\in \mathcal{Q}.
$$
Then  there exists $\Omega\subset\Sigma^+_k$ with $\mathbb{P}^+(\Omega) = 1$ such that for every $x\in X$,
$x$ has the property $\mathcal{Q}$ in direction of $\omega$, for every $\omega\in\Omega$.
\end{corollary}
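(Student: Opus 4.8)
The plan is to distill from the hypothesis a single finite block $w:=\rho_1\dots\rho_\ell$ that connects \emph{every} point of $X$ to $\mathcal{Q}$, and then to take for $\Omega$ the set of words in which $w$ occurs at least once. The point is that this $\Omega$ is defined purely in terms of $\omega$, with no reference to $x$; this is precisely the feature that upgrades the conclusion of Lemma~\ref{claim1} (a full-measure set $\Omega(x)$ depending on the basepoint) to a single full-measure set serving all $x$ simultaneously. First I would record the hypothesis in the form: for every $z\in X$ there is some $1\le i(z)\le\ell$ and some $q\in\mathcal{Q}$ with $z\mathcal{E}^{i(z)}_\rho\tilde{\mathcal{E}}q$, a chain that reads only the first $\ell$ letters of $\rho$ and is therefore governed entirely by the block $w$.

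Next I would verify that $\Omega:=\{\omega\in\Sigma_k^+:\ \omega_{m+1}\dots\omega_{m+\ell}=w\ \text{for some}\ m\ge 0\}$ has full measure. Grouping the coordinates of $\omega$ into the consecutive disjoint blocks $\omega_{(j-1)\ell+1}\dots\omega_{j\ell}$, $j\ge 1$, these are i.i.d.\ under the product measure $\mathbb{P}^+$ and each equals $w$ with probability $\prod_{t=1}^{\ell}p_{\rho_t}>0$, since every $p_i>0$. The second Borel--Cantelli lemma then gives that almost every $\omega$ contains infinitely many blocks equal to $w$, whence $\mathbb{P}^+(\Omega)=1$.

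It remains to show that a single $\omega\in\Omega$ works for all $x$ at once. Fix $\omega\in\Omega$ and a position $m\ge 0$ with $\omega_{m+1}\dots\omega_{m+\ell}=w$. For an arbitrary $x\in X$, build a chain $x=x_0,x_1,\dots,x_m$ of length $m$ in direction $\omega$ with respect to $\mathcal{E}$ — this is possible because $\mathcal{E}$ admits a successor after each application of a generator, as in all the situations treated above — and set $z:=x_m$. Applying the hypothesis at $z$ produces $z\mathcal{E}^{i}_\rho\tilde{\mathcal{E}}q$ for some $i\le\ell$ and $q\in\mathcal{Q}$; since $i\le\ell$ and $\omega_{m+1}\dots\omega_{m+\ell}=w=\rho_1\dots\rho_\ell$, the word $\sigma^m(\omega)$ begins with $\rho_1\dots\rho_i$, so this is equally a chain in direction $\sigma^m(\omega)$. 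Concatenating the two chains through the common point $z$ yields $x\mathcal{E}^{m+i}_\omega\tilde{\mathcal{E}}q$, that is, $x$ has a chain connection to $\mathcal{Q}$ in direction $\omega$.

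The delicate step is this alignment: a given $x$ need not meet an occurrence of $w$ at the very front of $\omega$, so I must first run a prefix chain of the precise length $m$ to place $x$ at the mouth of an occurrence of $w$, and only then invoke the hypothesis. What legitimizes gluing the prefix chain onto the hypothesis chain is exactly the composition-of-chains formalism set up before the statement, and what makes the whole argument uniform in $x$ — and so genuinely stronger than Lemma~\ref{claim1} — is that the hypothesis holds at \emph{every} point with the one fixed word $\rho$: whatever endpoint $z$ the prefix chain happens to reach, the same block $w$ finishes the connection to $\mathcal{Q}$.
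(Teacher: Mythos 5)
Your proposal is correct and rests on the same core idea as the paper's own proof: the cylinder over the block $\rho_1\dots\rho_\ell$ has positive $\mathbb{P}^+$-measure and, by hypothesis, serves \emph{every} point of $X$ simultaneously, so almost every word contains that block, and a prefix chain followed by the hypothesis chain produces the connection. Only the packaging differs. The paper defines $\Omega(n)$ (words along which every $x$ connects within $n+\ell$ steps) and asserts the recursive estimate $\mathbb{P}^+(\Sigma_k^+\setminus\Omega(n))\le(1-p)\,\mathbb{P}^+(\Sigma_k^+\setminus\Omega(n-\ell))\le(1-p)^{1+[n/\ell]}$, with $p=\mathbb{P}^+(C_\rho)$; you instead apply the second Borel--Cantelli lemma to the i.i.d.\ blocks $\omega_{(j-1)\ell+1}\dots\omega_{j\ell}$ --- once the paper's recursion is unrolled, the two computations are identical. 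What you add is genuinely useful: the paper's recursive inequality is stated without justification, and justifying it requires exactly the transport-and-concatenation step you carry out explicitly (if $\sigma^\ell(\omega)$ is good, one must first move an arbitrary $x$ along $\omega_1\dots\omega_\ell$ by an $\mathcal{E}$-chain before invoking goodness of the tail). Two caveats, both shared with the paper rather than defects of your argument alone: (i) your gluing step needs every point to admit an $\mathcal{E}$-successor under each generator (a serial relation); this is not among the corollary's hypotheses, though it holds in all the applications in the paper, and it is equally needed --- and equally unstated --- for the paper's recursion; (ii) under the paper's conventions, $x\mathcal{E}^i_\rho\tilde{\mathcal{E}}q$ reads $i+1$ letters of $\rho$ (the last one drives the $\tilde{\mathcal{E}}$-step), so when $i=\ell$ the relevant block is $\rho_1\dots\rho_{\ell+1}$ rather than $\rho_1\dots\rho_\ell$; both you and the paper work with the length-$\ell$ block, and the fix --- use the block of length $\ell+1$ throughout --- is the same trivial adjustment in either argument.
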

\begin{proof}Take
$$
    \Omega:=\{\omega\in\Sigma_k^+:\ \text{for every}\ x\in X,~x~\mathrm{has}~\mathrm{~a~chain~connection~to}~ \mathcal{Q}\mathrm{~in~direction~of}~\omega\}.
$$
We show that $\mathbb{P}^+(\Omega)=1$. First let us take
$$
   \Omega(n):=\{\omega:\ \text{for every}\ x\in X,\ x\mathcal{E}^{j}_{\omega}\tilde{\mathcal{E}}q\ \text{ for some}\
j\leq n+\ell \text{~and~}\ q\in \mathcal{Q}\}.
$$
Clearly
$\Omega\supset\bigcup_{n\in
N}\Omega(n)$. Consider a cylinder in $\Sigma^+_k$
 around the finite word $\rho_1\dots\rho_\ell$  denoted by $C_\rho$ with $\mathbb{P}^+(C_\rho)=p$. Since
for every $x\in X$,  $x\mathcal{E}^{j}_{\omega}\tilde{\mathcal{E}}q$,  for every $\omega\in
 C_\rho$ and for some $1\leq j\leq \ell$, we get
\begin{align*}\label{g}
 \mathbb{P}^+(\Sigma_k^+\setminus\Omega(x,n)) &\leq (1-p)\mathbb{P}^+(\Sigma_{k}^{+}\setminus \Omega(x,n-\ell))\\
&\leq(1-p)^{1+[\frac{n}{\ell}]}\to 0
\end{align*}
as $n\to \infty$. It follows that $\mathbb{P}^+(\Omega)=1$.
\end{proof}
\begin{proof}[Proof of Theorem B]
Write $\omega_{-t}\dots\omega_s=\beta_{-t}\dots\beta_{-1}\alpha_0\dots\alpha_s $ and take the cylinder set
$$C_{\omega_{-t}\dots\omega_s}:=\{\theta \in\Sigma_{k}: \theta_{i}=\omega_{i},-t\leq i
\leq s \}.$$

Fix $x\in X$. Now the density of the orbit $\mathcal{O}(\sigma,\rho)$ under the shift map $\sigma$ ensures that there exists  $n\in\mathbb{Z}$
so that $\sigma^{n}(\rho)\in C_{\omega_{-t}\dots\omega_s}$. Thus, there is  $ -t\leq i <0$ or $0\leq j\leq s$ so that
$$
x\mathcal{E}^{-i}_{\sigma^{n}(\omega)_{-t}\dots\sigma^{n}(\omega)_{-1}}\tilde{\mathcal{E}}^{-1}q\ \ \ \text{or}\ \ \
x\mathcal{E}^{j}_{\sigma^{n}(\omega)_0\dots\sigma^{n}(\omega)_s}\tilde{\mathcal{E}}q^\prime,\ \ \text{for some}\ q,q^\prime\in\mathcal{Q},
$$
which completes the proof.
\end{proof}
\section*{Acknowledgments}
We are grateful to Pablo G. Barrientos, Ale Jan Homburg and Fahimeh Khosh-ahang for
useful discussions and suggestions.
\def\cprime{$'$}

\end{document}